\tikzstyle{NE-lines}=[pattern=north east lines, pattern color=black!45]
\newcommand{\fsum}[1]{\sum_{m\geq 0}m!\left(#1\right)^{m}}
\newcommand{\Sym}{\mathnormal{S}}
\newcommand{\Co}{\mathit{Co}}
\newcommand{\floor}[1]{\lfloor#1\rfloor}
\newcommand{\pattern}[4]{
 \raisebox{0.6ex}{
 \begin{tikzpicture}[scale=0.25, baseline=(current bounding box.center), #1]
   \foreach \x/\y in {#4}
     \fill[NE-lines] (\x,\y) rectangle +(1,1);
   \draw (0.01,0.01) grid (#2+0.99,#2+0.99);
   \foreach \x/\y in {#3}
     \filldraw (\x,\y) circle (5pt);
 \end{tikzpicture}}
}
\newtheorem{theorem}{Theorem}
\newtheorem{corollary}[theorem]{Corollary}
\title{Turning cycle restrictions into mesh patterns via Foata's fundamental transformation}
\author{Anders Claesson and Henning Ulfarsson}
\date{23 October 2023}
\begin{document}
\maketitle
\thispagestyle{empty}

\begin{abstract}
  An adjacent $q$-cycle is a natural generalization of an adjacent
  transposition. We show that the number of adjacent $q$-cycles in a
  permutation maps to the sum of occurrences of two mesh patterns under
  Foata's fundamental transformation. As a corollary we resolve
  Conjecture 3.14 in the paper ``From Hertzprung's problem to
  pattern-rewriting systems'' by the first author.
\end{abstract}

Let $q$ be a positive integer. Following Brualdi and Deutsch~\cite{Brualdi2012},
define an \emph{adjacent $q$-cycle} in a permutation as a cycle of the form
\[(i,i+1,\dotsc,i+q-1).
\]
In particular, an adjacent $1$-cycle is a fixed point and an adjacent
$2$-cycle is more commonly known as an \emph{adjacent transposition}. Brualdi and
Deutsch showed, among other things, that if $a_q(n,k)$ is the number of
permutations of $\{1,2,\dotsc,n\}$ that---when expressed as a product of disjoint cycles---have
exactly $k$ adjacent $q$-cycles, then
\[
  a_q(n,k) = \sum_{j=k}^{\floor{n/q}}(-1)^{k+j}\binom{j}{k}\frac{(n-(q-1)j)!}{j!}.
\]
Foata's fundamental transformation~\cite{foata} bijectively maps a
permutations $\pi$ with $k$ cycles to a permutation $\sigma$ with $k$
left-to-right minima by writing each cycle of $\pi$ so that its leftmost
element is the smallest, sorting the cycles in descending order with
respect to their first element, and reading the resulting permutation
$\sigma$ as a word from left to right. For instance,
\[
  % 123456789
   \pi= 213967548
  = (5,6,7)(4,9,8)(3)(1,2)
   \;\;\mapsto\;\; \sigma=567498312.
\]
Consider the effect of this transformation on a fixed point (an adjacent
1-cycle) such as 3 in the permutation $\pi$ above. If it is not the
smallest element, then it is mapped to a left-to-right minimum in
$\sigma$ that is directly followed by another left-to-right minimum. In
terms of mesh patterns~\cite{Bra11} it is an occurrence of
\[
  s_1 = \pattern{}{2}{1/2,2/1}{0/0,0/1,1/0,1/1,1/2}
\]
Indeed, the shading to the southwest of the two points guarantees that
they are left-to-right minima, while the shading in the column between
the points guarantees that they are adjacent. If, on the other hand, the
fixed point is the smallest element, then it gets mapped to an occurrence
of $r_1=\pattern{scale=0.95}{1}{1/1}{0/0,1/0,1/1}$\vbox to 2.6ex{}.

Similarly, let us consider an adjacent transposition $(i, i+1)$ in
$\pi$. Depending on whether or not $(i, i+1)$ is the rightmost cycle of
$\pi$ (i.e., whether or not $i=1$) it is mapped to an occurrence of one
of the patterns
\[
  r_2 = \pattern{}{2}{1/1,2/2}{0/0,0/1,1/0,1/1,1/2,2/0,2/1,2/2}
  \quad\,\text{or}\quad
  s_2 = \pattern{}{3}{1/2,2/3,3/1}{0/0,0/1,0/1,0/2,1/0,1/1,1/2,1/3,2/0,2/1,2/2,2/3,3/2}
\]
in $\sigma$. For instance, $(1,2)$ in the example permutation $\pi$
corresponds to an occurrence of $r_2$ in $\sigma$.  Proceeding with
adjacent 3-cycles we have
\[
   r_3 = \pattern{}{3}{1/1,2/2,3/3}{
     0/0,0/1,0/2,1/0,1/1,1/2,1/3,2/0,2/1,2/2,2/3,3/0,3/1,3/2,3/3}
  \quad\,\text{and}\quad
   s_3 = \pattern{}{4}{1/2,2/3,3/4,4/1}{
     0/0,0/1,0/2,0/3,
     1/0,1/1,1/2,1/3,1/4,
     2/0,2/1,2/2,2/3,2/4,
     3/0,3/1,3/2,3/3,3/4,
             4/2,4/3}
\]
and the cycle $(5,6,7)$ in $\pi$ corresponds to a unique
occurrence of $s_3$ in $\sigma$.

It should now be clear how to define $r_q$ and $s_q$ for any $q\geq 1$,
and that the following theorem is true.

% \begin{figure}
% \begin{alignat*}{2}
%   r_1 &= \pattern{}{1}{1/1}{0/0,0/1,1/0}
%   & s_1 &= \pattern{}{2}{1/1,2/2}{0/1,0/2,1/1,1/2,2/1}\\[1.5ex]
%   r_2 &= \pattern{}{2}{2/1,1/2}{0/0,0/1,0/2,1/0,1/1,1/2,2/0,2/1}
%   & s_2 &= \pattern{}{3}{1/1,2/3,3/2}{0/1,0/2,0/3,1/1,1/2,1/3,2/0,2/1,2/2,2/3,3/1,3/2} \\[1.5ex]
%   r_3 &= \pattern{}{3}{3/1,2/2,1/3}{
%     0/0,0/1,0/2,0/3,1/0,1/1,1/2,1/3,2/0,2/1,2/2,2/3,3/0,3/1,3/2}\qquad
%   & s_3 &= \pattern{}{4}{1/1,2/4,3/3,4/2}{
%     0/1,0/2,0/3,0/4,1/1,1/2,1/3,1/4,2/0,2/1,2/2,2/3,2/4,3/0,3/1,3/2,3/3,3/4,4/1,4/2,4/3} \\
% \end{alignat*}
% \caption{The mesh patterns $r_q$ and $s_q$}\label{mesh-patterns}
% \end{figure}

\begin{theorem}\label{main}
  Assume that $\pi\mapsto\sigma$ under Foata's fundamental
  transformation. For any $q \geq 1$, the number of adjacent $q$-cycles
  in $\pi$ is equal to the sum of the number of occurrences of $r_q$ and
  $s_q$ in $\sigma$.
\end{theorem}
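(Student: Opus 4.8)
The plan is to make precise how Foata's transformation converts the cycle structure of $\pi$ into a block structure of $\sigma$, and then to read the two mesh patterns off that block structure. Write $\sigma = B_1 B_2 \cdots B_k$ as the concatenation of the $k$ blocks the transformation produces, one per cycle of $\pi$: the block $B_j$ begins with the minimum of its cycle, the blocks are listed so that these minima strictly decrease, and hence the first letters of $B_1,\dots,B_k$ are exactly the left-to-right minima of $\sigma$ (in particular $B_k$ begins with $1$). The key observation I would record first is the clean dictionary entry: a cycle of $\pi$ is an adjacent $q$-cycle $(i,i+1,\dots,i+q-1)$ if and only if the corresponding block is the increasing run $i\,(i+1)\cdots(i+q-1)$ of $q$ consecutive integers — writing that cycle with its smallest element first already lists its entries increasingly, and conversely an increasing run of $q$ consecutive integers read as a cycle with smallest element first is such an adjacent $q$-cycle. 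So the theorem reduces to counting blocks of $\sigma$ that are increasing runs of $q$ consecutive integers.

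I would then split these blocks into two types according to whether the run contains the letter $1$, i.e.\ whether it is the last block $B_k$. A run containing $1$ must be $1\,2\cdots q$, it is forced to be the last block (its minimum is the global minimum), and it makes $\sigma$ end in $1\,2\cdots q$; I claim these are precisely the occurrences of $r_q$. Translating the shading: the empty columns strictly between consecutive points force the chosen positions to be consecutive; the empty rows strictly between consecutive points force the chosen values to be consecutive integers; the empty rightmost column forces the last chosen position to be $n$; and the empty cells in the leftmost column (all but the topmost) force every earlier letter to exceed all $q$ chosen values. Together these force the chosen values to be $1,2,\dots,q$ occupying the last $q$ positions, so occurrences of $r_q$ are in bijection with adjacent $q$-cycles of $\pi$ of the form $(1,2,\dots,q)$ — there being at most one of each.

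For the second type — an increasing run $w\,(w+1)\cdots(w+q-1)$ with $w\ge 2$, which is some block $B_j$ with $j<k$ — I would attach the window of $q+1$ consecutive positions given by this block followed by the first letter $c$ of $B_{j+1}$; since $c$ is the next cycle minimum, $c<w$, so the window has relative order $2,3,\dots,q+1,1$, the classical pattern underlying $s_q$. I would then verify the shading of $s_q$ matches exactly: the empty inner columns force the first $q$ points of an occurrence to occupy consecutive positions and the $(q+1)$-st to be the very next position; the empty cells in the upper rows of the leftmost column force everything before the window to exceed every block value, which (using disjointness of cycles: the integers $w,\dots,w+q-1$ are all consumed inside the block) is automatic for a block of this shape and, conversely, forces an abstract occurrence to begin with a complete block; the empty cells in the rows between consecutive block values force those values to be consecutive integers; and the only unshaded cells, at the top-left and bottom-right, impose no further constraint, being exactly where larger earlier cycle elements and smaller later cycle minima are free to sit. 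Thus occurrences of $s_q$ are in bijection with adjacent $q$-cycles of $\pi$ not of the form $(1,\dots,q)$.

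Combining the cases, the number of occurrences of $r_q$ plus the number of occurrences of $s_q$ equals the number of increasing runs of $q$ consecutive integers among the blocks of $\sigma$, which by the dictionary entry equals the number of adjacent $q$-cycles of $\pi$. I expect the main obstacle to be the faithful translation of the mesh-pattern shadings into these combinatorial conditions, and in particular the forcing direction for $s_q$: arguing that the leftmost-column shading, together with the fact that every integer strictly between the extreme block values must appear somewhere in $\sigma$, genuinely pins an abstract occurrence down to a complete block followed by the next left-to-right minimum, rather than some accidental sub-configuration. A secondary bookkeeping task is to commit up front to the precise definition of $r_q$ and $s_q$ for general $q$ extrapolated from the $q\le 3$ cases, since the statement rests on that extrapolation.
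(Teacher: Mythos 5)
Your proof is correct and follows essentially the same route the paper takes: the paper's argument for Theorem~\ref{main} is precisely the informal discussion of the block structure produced by Foata's transformation and the translation of the shadings for $q=1,2,3$, which the authors then declare makes the general statement ``clear.'' You have simply supplied, for general $q$, the details (the block dictionary, the case split on whether the cycle contains $1$, and the careful reading of each shaded row and column) that the paper leaves implicit.
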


The following corollary provides a generating function identity
conjectured by the first author~\cite[Conjecture 3.14]{Cl2022}.
\begin{corollary}
    We have
    \[\sum_{n\geq 0}|\Sym_n(p)|\mskip1mu x^n
        = \fsum{\frac{x}{1+x^2}},
        \;\text{ where }\,
        p = \pattern{}{3}{1/1,2/3/,3/2}{0/2,0/3,1/2,1/3,2/0,2/1,2/2,2/3,3/1,3/2}
    \]
    and $\Sym_n(p)$ denotes the set of permutations avoiding the mesh pattern $p$.
\end{corollary}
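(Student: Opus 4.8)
The plan is to reduce the Corollary to the combinatorial identity $|\Sym_n(p)| = a_2(n,0) + a_2(n-2,0)$, where $a_2(n,0)$ counts the permutations of $[n]$ with no adjacent $2$-cycle (and $a_2(k,0)=0$ for $k<0$). First I would decode $p$. The column of $p$ lying between its last two points is entirely shaded, which forces those two points of a $132$-occurrence to sit at adjacent positions of $\sigma$; the shading in the two leftmost columns forces every entry of $\sigma$ to the left of that pair to be smaller than the lower of the two points; and the shading in the last column then forces the two points to carry consecutive values and forces the value one below the lower point to occur to the left as well. Altogether, $\sigma$ contains $p$ exactly when there is an index $b\ge 2$ with $\sigma(b)=\sigma(b+1)+1$ and $\max\{\sigma(1),\dots,\sigma(b-1)\}=\sigma(b+1)-1$; equivalently, some left-to-right maximum of $\sigma$ other than $\sigma(1)$ overshoots the previous running maximum by exactly $2$ and is immediately followed by the value it skipped.

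The crux is then to identify $\Sym_n(p)$ with a class governed by Theorem~\ref{main}. From that theorem with $q=2$, and from the finer correspondence underlying it — an occurrence of $r_2$ marks the presence of the cycle $(1,2)$, whereas occurrences of $s_2$ match, one for one, the adjacent $2$-cycles $(i,i+1)$ with $i\ge 2$ — the Foata-preimages of the $s_2$-avoiders are precisely the permutations of $[n]$ with no cycle $(i,i+1)$ for $i\ge 2$; hence $|\Sym_n(s_2)|$ equals the number of such permutations, which by splitting on whether $(1,2)$ is one of the cycles is $a_2(n,0)+a_2(n-2,0)$. So it would be enough to construct a bijection $\Sym_n(p)\to\Sym_n(s_2)$, and I expect this to be the main difficulty: $p$ and $s_2$ have different underlying classical patterns ($132$ against $231$) and are not interchanged by any symmetry of the square, so the bijection is neither Foata's map nor a reflection of it. One concrete attempt would be to follow Foata's transformation with a local correction that only touches the configurations on which $p$-avoidance and $s_2$-avoidance part company (already at length $3$ the two classes differ just by swapping $132$ and $231$), using the running-maximum description above to locate those configurations; an alternative, which avoids the cycle picture, is a direct inclusion--exclusion over the occurrences of $p$ in $\sigma$, where distinct occurrences are forced to occupy disjoint pairs of adjacent positions and disjoint pairs of consecutive values, so that the alternating sum ought to collapse to the coefficients of $\fsum{\frac{x}{1+x^2}}$.

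The remaining step is routine generating-function manipulation. The Brualdi--Deutsch formula gives $a_2(n,0)=\sum_{j\ge 0}(-1)^j\binom{n-j}{j}(n-2j)!$, so with $m=n-2j$ and $\sum_{j\ge0}\binom{m+j}{j}(-x^2)^j=(1+x^2)^{-(m+1)}$ we get $\sum_{n\ge0}a_2(n,0)x^n=\sum_{m\ge0}m!\,x^m(1+x^2)^{-(m+1)}$; therefore $\sum_{n\ge0}|\Sym_n(p)|x^n=(1+x^2)\sum_{m\ge0}m!\,x^m(1+x^2)^{-(m+1)}=\sum_{m\ge0}m!\,x^m(1+x^2)^{-m}=\fsum{\frac{x}{1+x^2}}$, which is the claimed identity.
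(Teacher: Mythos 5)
Your overall skeleton matches the paper's --- both reduce the identity to $|\Sym_n(p)| = a_2(n,0)+a_2(n-2,0)$ and both split that count according to whether $(1,2)$ is a cycle --- but the one step you yourself flag as ``the main difficulty'' is precisely the step that is missing, and without it the proof does not close. You need $|\Sym_n(p)| = |\Sym_n(s_2)|$ (or a direct proof that $|\Sym_n(p)| = a_2(n,0)+a_2(n-2,0)$), and you offer only two speculative attacks: a ``Foata followed by a local correction'' whose correction is never specified, and an inclusion--exclusion whose alternating sum you only say ``ought to collapse.'' Neither is carried out, so the corollary is not proved. Your remark that $p$ and $s_2$ are not related by a symmetry of the square is true of the mesh patterns as drawn, but it leads you away from the actual resolution: the paper applies reverse-followed-by-inverse to the pair $\{r_2,s_2\}$, obtaining patterns $r_2'$ and $s_2'$ whose underlying classical pattern is now $132$, the same as $p$'s; the Shading Lemma of Claesson--Tenner--Ulfarsson then shows $p$ and $s_2'$ are \emph{coincident}, i.e.\ $\Sym_n(p)=\Sym_n(s_2')$ exactly (not just equinumerous). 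The count then splits as $\Sym_n(r_2',s_2')$ (in bijection with adjacent-transposition-free permutations via Foata and the symmetry, giving $a_2(n,0)$) together with the $s_2'$-avoiders containing $r_2'$, which are exactly the permutations $21\oplus\tau$ with $\tau\in\Sym_{n-2}(r_2',s_2')$, giving $a_2(n-2,0)$. Your direct decoding of $p$ (adjacent positions $b,b+1$ with $\sigma(b)=\sigma(b+1)+1$ and prefix maximum $\sigma(b+1)-1$) is correct and could in principle substitute for the Shading Lemma, but you never connect it back to the cycle count.

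Two smaller points. First, your refined reading of Theorem~\ref{main} --- that occurrences of $s_2$ alone biject with the adjacent transpositions $(i,i+1)$, $i\geq 2$, so that $|\Sym_n(s_2)|=a_2(n,0)+a_2(n-2,0)$ --- is a slightly stronger statement than the theorem as written (which only equates the sum $r_2+s_2$ with the total count), though it does follow from the construction; the paper avoids needing it by working with joint avoidance of both patterns. Second, your final generating-function computation is correct and is genuinely different from (and arguably cleaner than) the paper's: you sum the Brualdi--Deutsch formula for $a_2(n,0)$ directly, via $\sum_{j\geq 0}\binom{m+j}{j}(-x^2)^j=(1+x^2)^{-(m+1)}$, to get $A(x)=F(x)/(1+x^2)$ in closed form, whereas the paper verifies that $F(x)/(1+x^2)$ satisfies the Brualdi--Deutsch differential equation. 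That part of your argument could replace the paper's ODE check; the combinatorial core cannot stand as written.
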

\begin{proof}
  Let us denote the right-hand side of the conjectured identity by
  $F(x)$, and let $A(x)$ be the generating function for the number of
  permutations of $[n]$ whose disjoint cycle decompositions have no
  adjacent transpositions. In other words, the coefficient of $x^n$ in
  $A(x)$ is $a_2(n,0)$; these numbers form sequence A177249 in the
  OEIS~\cite{OEIS}.  Brualdi and Deutsch~\cite{Brualdi2012} have shown
  that $A(x)$ satisfies the differential equation
  \[
    x^2(1+x^2)A'(x)-(1+x^2)(1-x-x^2)A(x)+1-x^2=0,\quad A(0)=1.
  \]
  Term-wise differentiation yields
  \[
    \frac{d}{dx}\biggl(\frac{F(x)}{1+x^2}\biggr)
    = \frac{(1-x-x^2)F(x)-1+x^2}{x^2(1+x^2)}
  \]
  and using this it is easy to verify that $F(x)/(1+x^2)$ satisfies the same
  differential equation as $A(x)$. It thus suffices to show that
  $\sum_{n\geq 0}|\Sym_n(p)|\mskip1mu x^n = (1+x^2)A(x)$, or, equivalently,
  \begin{equation}\label{the-eq}
    |\Sym_n(p)| \,=\, a_2(n,0) + a_2(n-2,0)\quad\text{for $n\geq 2$.}
  \end{equation}
  As a special case of Theorem~\ref{main} we find that Foata's
  fundamental transformation provides a one-to-one correspondence
  between permutations without adjacent transpositions and permutations
  that avoid $r_2$ and $s_2$.  By symmetry (reverse followed by inverse)
  we may equivalently consider permutations avoiding the two patterns
  \[
    r_2' = \pattern{}{2}{1/2,2/1}{0/0,0/1,0/2,1/0,1/1,1/2,2/0,2/1} \quad\text{and}\quad
    s_2' = \pattern{}{3}{1/1,2/3/,3/2}{0/1,0/2,0/3,1/1,1/2,1/3,2/0,2/1,2/2,2/3,3/1,3/2}
  \]
  By the Shading lemma~\cite{shading}, the pattern $p$ is
  \emph{coincident} with the pattern $s_2'$, in the sense that
  $\Sym_n(p) = \Sym_n(s_2')$ for all $n\geq 0$. By conditioning on
  whether a permutation avoiding $s_2'$ also avoids $r_2'$ or contains
  $r_2'$ we find that
  \[
    \Sym_n(p) =  \Sym_n(r_2', s_2') \cup \bigl(\Co_n(r_2') \cap \Sym_n(s_2')\bigr),
  \]
  where the union is disjoint and $\Co_n(r_2')$ denotes the set of
  permutations that contain $r_2'$. Note that a permutation $\pi$
  contains $r_2'$ and avoids $s_2'$ precisely when it is the direct sum
  $\pi=21\oplus\sigma$ of the permutation $21$ and a permutation
  $\sigma$ that avoids $r_2'$ and $s_2'$. This establishes
  equation~\eqref{the-eq} and concludes the proof.
\end{proof}

As above, assume that $\pi\mapsto\sigma$ under Foata's fundamental
transformation. A direct consequence of the definition of this map
is that the number of cycles of $\pi$ equals the number of left-to-right
minima of $\sigma$, which in turn is the number of occurrences of the
mesh pattern
\[
  \pattern{}{1}{1/1}{0/0}
\]
in $\sigma$. We have shown that the statistic ``number of
fixed points'' translates to the mesh pattern statistic
\[
  \pattern{scale=0.95}{1}{1/1}{0/0,1/0,1/1} \;+\, 
  \pattern{}{2}{1/2,2/1}{0/0,0/1,1/0,1/1,1/2}
\]
and more generally that the number of adjacent $q$-cycles translates to
$r_q+s_q$. A fixed point $\pi(i)=i$ of $\pi$ is said to be \emph{strong}
if $j<i\Rightarrow \pi(j)<\pi(i)$ and $j>i\Rightarrow \pi(j)>\pi(i)$;
see \cite[Ex.\ 1.32b]{EC1}. In term of mesh patterns, strong fixed
points are occurrences of $\pattern{scale=0.8}{1}{1/1}{1/0,0/1}$ in
$\pi$, and it is easy to see that they map to occurrences of
$\pattern{scale=0.8}{1}{1/1}{0/0,1/1}$ in $\sigma$, sometimes called
\emph{skew strong fixed points}~\cite{Bra11}.  What other properties of
the cycle structure of $\pi$ can be neatly expressed in terms of
occurrences of mesh patterns in $\sigma$?  Are the examples presented
here special cases of a more general phenomena?

\subsection*{Acknowledgements}
This work was started at Schloss Dagstuhl (Leibniz-Zentrum für
Informatik), seminar 23121, and we thank the institute and the
organizers for giving us the opportunity to participate.

\bibliographystyle{plain}
\bibliography{hertz}

\end{document}